\title{Splitting matchings and the Ryser-Brualdi-Stein conjecture for multisets}
\author{Michael Anastos\footnote{Institute of Science and Technology Austria, Klosterneuburg, Austria. This project has received funding from the European Union’s Horizon 2020 research and innovation
programme under the Marie Sk\l{}odowska-Curie grant agreement No 101034413
\includegraphics[width=4.7mm, height=3mm]{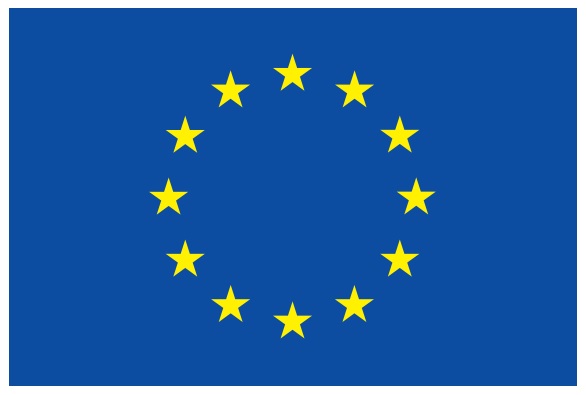}.} 
\and David Fabian\footnote{Freie Universität Berlin, Department of Mathematics and Computer Science, Berlin, Germany. Research supported by the Deutsche Forschungsgemeinschaft (DFG, German Research Foundation) Graduiertenkolleg “Facets of Complexity” (GRK 2434).} 
\and Alp M\"uyesser\footnote{University College London, London, UK.}  
\and Tibor Szabó\textsuperscript{\dag}}
\theoremstyle{plain}
\newtheorem{theorem}{Theorem}[section]
\newtheorem{proposition}[theorem]{Proposition}
\newtheorem{conjecture}[theorem]{Conjecture}
\theoremstyle{definition}
\newcommand\N{\mathbb{N}}
\begin{document}
\maketitle
\begin{abstract} We study multigraphs whose edge-sets are the union of three perfect matchings, $M_1$, $M_2$, and $M_3$. Given such a graph $G$ and any $a_1,a_2,a_3\in \mathbb{N}$ with $a_1+a_2+a_3\leq n-2$, we show there exists a matching $M$ of $G$ with $|M\cap M_i|=a_i$ for each $i\in \{1,2,3\}$. The bound $n-2$ in the theorem is best possible in general.
We conjecture however that if $G$ is bipartite, the same result holds with $n-2$ replaced by $n-1$. We give a construction that shows such a result would be tight. We also make a conjecture generalising the Ryser-Brualdi-Stein conjecture with colour multiplicities.
\end{abstract}

\section{Introduction} 
Let $G$ be a graph on $2n$ vertices whose edge-set is the union of $k$ edge-disjoint perfect matchings. Alternatively, one can also imagine a properly $k$-edge-coloured $k$-regular graph, where the matchings are the colour classes. For which sequences $a_1,\ldots, a_k$ with $\sum_{i\in[k]}a_i\leq n$ does there exist a ``colourful'' matching $M$ of $G$ with the property that $|M\cap M_i|\geq a_i$ for each $i\in [k]$? This question was introduced by Arman, R\"odl, and Sales~\cite[Question 1.1]{arman2021colourful}. In their main result they obtained a couple of sufficient conditions for a relaxed version of the problem, where the base graph is $\ell$-regular and $\ell$-edge-coloured with a slightly larger $\ell \sim (1+\varepsilon)k$.  

In our paper we are mostly concerned with the original problem for three colours. Arguably, the first natural question is whether there exists a  ``fairly split'' perfect matching $M$, i.e. one with $|M\cap M_i| =n/3$ for every $i=1,2,3$. Of course $n$ has to be divisible by $3$ for this to have a chance of happening. 
It turns out that even if $3$ divides $n$, a fairly split perfect matching is only guaranteed to 
exist if $n=3$. Even more generally, for any $k\leq n-1$ or $k=n$ even, the only colour-multiplicity tuples $(a_1,\ldots, a_k)$ with $n=a_1 + \cdots + a_k$ which can be realised by a colorful perfect matching in any properly $k$-edge-coloured $k$-regular graph on $2n$ vertices are the trivial ones, namely those having a coordinate $n$. 

\begin{proposition}\label{prop:construction}
	Let $a_1, \ldots a_k \in \{ 0, 1, \ldots , n-1\}$ and  $n=a_1+\cdots +a_k$. For every $n>k$ or $n=k$ even, there exists a bipartite graph $G=(V,E)$ with $n$ vertices in each side whose edge set is the disjoint union of $k$ perfect matchings $M_1,\ldots ,M_k$, and there is no perfect matching $M$ of $G$ with $|M\cap M_i|=a_i$ for each $i\in[k]$.
\end{proposition}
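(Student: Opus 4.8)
The plan is to take $G$ to be a \emph{circulant} bipartite graph. I would identify both vertex classes with $\mathbb{Z}_n$, writing the left side as $x_0,\dots,x_{n-1}$ and the right side as $y_0,\dots,y_{n-1}$, and for distinct shifts $s_1,\dots,s_k\in\mathbb{Z}_n$ set $M_j=\{x_i y_{i+s_j}: i\in\mathbb{Z}_n\}$. Each $M_j$ is a perfect matching, distinct shifts make them edge-disjoint, and $G=\bigcup_{j} M_j$ is $k$-regular and properly $k$-edge-coloured. A perfect matching of $G$ is precisely a permutation $\pi$ of $\mathbb{Z}_n$ all of whose displacements $\pi(i)-i$ lie in $\{s_1,\dots,s_k\}$, and the number of colour-$j$ edges it uses equals $|\{i:\pi(i)-i\equiv s_j\}|$. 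The crucial point is a telescoping identity: since $\pi$ is a bijection, $\sum_i(\pi(i)-i)\equiv 0 \pmod n$, so any matching realising the target multiplicities must satisfy $\sum_{j} a_j s_j \equiv 0 \pmod n$.

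Hence it suffices to choose distinct shifts $s_1,\dots,s_k\in\mathbb{Z}_n$ (possible since $k\le n$) with $\sum_j a_j s_j \not\equiv 0\pmod n$; then the necessary condition fails and no matching realises $(a_1,\dots,a_k)$. Note first that, as each $a_j\le n-1$ and $\sum_j a_j=n$, at least two multiplicities are positive, so $k\ge 2$. I would then split according to whether the $a_j$ are all equal. If they are not, pick two colours with $a_1\ne a_2$, assign them shifts $0$ and $1$, and distribute the shifts $2,\dots,k-1$ arbitrarily among the remaining colours. Comparing this assignment with the one obtained by swapping the shifts of colours $1$ and $2$, the two weighted sums differ by exactly $a_1-a_2$; since $0<|a_1-a_2|\le n-1$ this difference is nonzero mod $n$, so at least one of the two assignments is as desired. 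This argument works whenever $k\le n$, with no parity restriction.

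If instead all $a_j$ are equal, then $k\mid n$ and $a_j=n/k$, so $\sum_j a_j s_j=(n/k)\sum_j s_j$, which is $\equiv 0\pmod n$ if and only if $\sum_j s_j\equiv 0\pmod k$. I therefore want distinct shifts whose sum is $\not\equiv 0\pmod k$. When $n>k$ I would start from $\{0,1,\dots,k-1\}$, whose sum $\binom{k}{2}$ is $\equiv 0\pmod k$ precisely when $k$ is odd; in that case I replace the shift $k-1$ by $k$ (available since $k\le n-1$), which changes the sum by $+1$ and makes it $\equiv 1\not\equiv 0\pmod k$. When $n=k$ the shift set is forced to be all of $\mathbb{Z}_n$, so $\sum_j s_j=n(n-1)/2\equiv n/2\pmod n$, which is nonzero exactly when $n$ is even.

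The main obstacle is exactly this uniform case with $n=k$: there the shift set cannot be chosen, the weighted sum is pinned to $n(n-1)/2$, and the construction succeeds if and only if $n$ is even. This is the precise point where the hypothesis ``$n>k$ or $n=k$ even'' is needed, and it also explains why the excluded case ($n=k$ odd, all multiplicities equal to $1$) genuinely admits a realising matching in every such circulant, since then the displacement sum vanishes automatically. Everywhere else the freedom in selecting the shifts, exploited via either the swap comparison or the single-shift adjustment, is more than enough.
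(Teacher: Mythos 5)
Your proposal is correct and takes essentially the same approach as the paper: your circulant (difference-table) construction is isomorphic to the paper's sum-table of $\mathbb{Z}_n$, and both proofs reduce the problem to choosing distinct shifts with $\sum_j a_j s_j \not\equiv 0 \pmod n$. Your swap argument for a pair of unequal multiplicities and your parity analysis of the all-equal case mirror, respectively, the paper's permutation argument and its separate treatment of the case $a_1=\cdots=a_k=n/k$.
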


The existence of a fairly split perfect matching for odd $k=n$ in bipartite graphs is known as Ryser's Conjecture, a famous and tantalising open problem.

As to the question of Arman, Rödl, and Sales for three colours, we show that a colourful matching of size as large as $n-2$ can always be found for any colour-multiplicity vector $(a_1,a_2,a_3)$. In fact, this can be guaranteed even when the matchings we start with are not necessarily disjoint.

\begin{theorem}\label{thm:arbitrary}
Let $G$ be a (multi-)graph on $2n$ vertices whose edge set is the disjoint union of three perfect matchings $M_1, M_2, M_3$. Then for any $a_1,a_2,a_3\in\N$ with $a_1+a_2+a_3 \leq n-2$ there exists a matching $M$ in $G$ such that $|M\cap M_1| = a_1$,  $|M\cap M_2| = a_2$, and $|M\cap M_3| = a_3$.
\end{theorem}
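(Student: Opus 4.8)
The plan is to fix the total size $s = a_1+a_2+a_3 \le n-2$ and to study the set of realisable colour-profiles $\mathcal{S}_s = \{(b_1,b_2,b_3)\in\N^3 : b_1+b_2+b_3 = s,\ \exists \text{ matching } M \text{ with } |M\cap M_i| = b_i\}$. The point $(s,0,0)$ lies in $\mathcal{S}_s$ for free, since any $s$ edges of $M_1$ form a matching. So it suffices to show that $\mathcal{S}_s$ contains every lattice point of the simplex $\{b\in\N^3: b_1+b_2+b_3 = s\}$, and for this I would show that all these points are connected to $(s,0,0)$ through elementary \emph{recolouring moves}. Here a move $R_{i\to j}$ takes a matching $M$ to a matching $M'$ of the \emph{same} size with $|M'\cap M_i| = |M\cap M_i|-1$, with $|M'\cap M_j| = |M\cap M_j|+1$, and with the third multiplicity unchanged. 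Since the graph on the simplex whose edges join pairs differing by one such transposition is connected, the whole theorem reduces to a single statement: whenever $|M|\le n-2$ and $|M\cap M_i|\ge 1$, the move $R_{i\to j}$ can be performed (after permuting the colours we may treat $i,j$ as any ordered pair).

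The engine for performing moves is alternating-path surgery in the auxiliary multigraph $D = M\cup M_j$, keeping multiplicities. Every vertex has exactly one incident $M_j$-edge and at most one incident $M$-edge, so $D$ is a disjoint union of paths and cycles, and the endpoints of the paths are precisely the $2n-2s\ge 4$ vertices left uncovered by $M$. If some uncovered vertex $u$ has its $M_j$-partner $\sigma_j(u)$ covered by $M$ through a colour-$i$ edge $\sigma_j(u)w$, then deleting that edge and inserting the $M_j$-edge $u\sigma_j(u)$ leaves the size unchanged, raises the colour-$j$ count by one and lowers the colour-$i$ count by one: this is exactly $R_{i\to j}$. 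A parallel-edge degeneracy, where a pair carries both a colour-$i$ and a colour-$j$ edge and $M$ uses the colour-$i$ copy, is handled directly by exchanging the two parallel copies on that pair. Thus the move is immediate as soon as one uncovered vertex sees colour $i$ across its $M_j$-edge.

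The main obstacle is the remaining case, in which \emph{no} uncovered vertex has this property: for every uncovered $u$ the partner $\sigma_j(u)$ is either uncovered or covered by a colour-$k$ edge, where $k$ is the third colour. A single prefix swap of length greater than two does not help here, because walking further along a $D$-path forces the removal of colour-$k$ edges and therefore disturbs the third multiplicity; so a genuinely global rearrangement is needed. This is where the slack $s\le n-2$, equivalently at least four uncovered vertices, is essential, and where the construction witnessing tightness of the bound $n-2$ shows that a mere three uncovered vertices would not suffice. I would resolve this case by combining \emph{two} alternating paths emanating from distinct uncovered vertices: following one path until it first meets a colour-$i$ edge, and using a second path to re-absorb the colour-$k$ edges that the first introduces, so that the two corrections cancel on colour $k$ while still netting a single $i\to j$ exchange. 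Guaranteeing that this cancellation is always achievable — choosing the two uncovered endpoints and terminating the two walks so that the colour-$k$ bookkeeping balances and the paths either stay vertex-disjoint or splice consistently — is the crux of the argument, and the extra pair of uncovered vertices furnished by the bound $n-2$ is exactly what supplies the needed freedom.
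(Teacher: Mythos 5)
Your reduction is sound and is in fact the same skeleton the paper uses: realise $(s,0,0)$ trivially, then repeatedly perform an elementary move that trades one colour-$i$ edge for one colour-$j$ edge while keeping the size and the third multiplicity fixed, and handle the easy situations (the $M_j$-partner of an uncovered vertex is covered by a colour-$i$ edge, or parallel-edge degeneracies) by a single local swap. One small misclassification: the sub-case where the partner is uncovered is not part of the obstacle at all, since there you may add that $M_j$-edge and delete an arbitrary edge of $M\cap M_i$, which exists because $|M\cap M_i|\ge 1$. The genuine gap is the remaining case, where \emph{every} uncovered vertex has its $M_j$-partner covered by a colour-$k$ edge. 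That case is where the entire difficulty of the theorem is concentrated, and your proposal offers only a plan there --- follow one alternating path until it first meets a colour-$i$ edge, then use a second path to cancel the colour-$k$ damage --- while explicitly conceding that guaranteeing this cancellation ``is the crux of the argument.'' As described, the plan has real obstructions: the first path may terminate without ever meeting a colour-$i$ edge, and nothing in your setup controls how the two paths interact or ensures the bookkeeping balances, so the surgery cannot simply be asserted to exist.

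The paper closes this case not by a longer surgery but by an extremal argument, and this is the missing idea. Among all matchings with the current distribution, choose $M$ minimising the length $\ell_{23}(M)$ of a shortest $(M_2\setminus M)$-$(M_3\cap M)$-alternating path starting at an uncovered vertex; let $x$ be an uncovered vertex attaining this minimum, and pick a second uncovered vertex $z$ distinct from the endpoints of that path and from $M_3(x)$ (possible because size $n-2$ leaves four uncovered vertices). In the hard case $M_2(x)$ is covered by a colour-$3$ edge, and one examines $M_3(z)$: if it is uncovered or covered by a colour-$1$ edge, the two local exchanges at $x$ and $z$ cancel on colour $3$ and net exactly the desired $1\to 2$ trade; if instead it is covered by a colour-$2$ edge, the analogous double swap preserves the distribution but creates an uncovered vertex whose alternating path is shorter by two, contradicting the minimality of $\ell_{23}(M)$. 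So no path is ever followed to its end; minimality converts the ``cancellation is always achievable'' statement you could not prove into a short contradiction. Without this extremal mechanism (or some substitute for it), your argument does not go through.
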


The proofs of the above theorem and Proposition \ref{prop:construction} are given in Section~\ref{sec:proof}. 

{\bf Remark 1.} In light of Proposition~\ref{prop:construction}, it is natural to ask how close to a fairly split perfect matching we can get for $k\geq 3$. 
Arman et al.~\cite{arman2021colourful} note that their results imply that one can always choose a matching $M$ with $|M\cap M_i| \geq n/k - \varepsilon n$ for every $i\in \{1,\ldots,k\}$. 
In their concluding remarks they also mention that their proof could be modified to establish the existence of a (smallest) constant $C_k$, depending only on $k$, such that a matching $M$ with $|M\cap M_i| \ge n/k - C_k$ for each $i\in\{1,\ldots,k\}$ can always be found. 
Proposition~\ref{prop:construction} shows that $C_k\geq 1$ for every $k$ and Theorem~\ref{thm:arbitrary} shows that $C_3=1$.
Using Alon's Necklace Theorem, as in \cite{arman2021colourful}, in combination with some extra combinatorial ideas, one can obtain a linear bound $C_k \leq 4k-6$ for all $k$. 
Since we believe that $C_k =1$ (cf Conjecture~\ref{con:optimistic}), we chose not to include the proof of that bound.

{\bf Remark 2.} We note that the bound $n-2$ in Theorem~\ref{thm:arbitrary} cannot be improved for general graphs without extra assumptions. To see this, for any even $n>2$ one can consider the (unique) decomposition of $n/2$ disjoint copies of $K_4$ into three perfect matchings $M_1,M_2,M_3$. 
Then the intersection of any matching $M$ of $G$ with any $K_4$ is a subset of some $M_i$, consequently the size of $M$ is at most $n$ minus the number of indices $i\in\{1,2,3\}$ for which $|M\cap M_i|$ is odd. 
Hence a matching $M$ of size $n-1=a_1+a_2+a_3$ with colour-multiplicity triple $(a_1,a_2,a_3)$ does not exist if $a_1,a_2,a_3$ are all odd. 

We conjecture that the construction from the previous remark is the only exception, i.e., a split with $a_1 + a_2 +a_3 = n-1$ should always possible if at least one component of $G$ is not a $K_4$. 

\begin{conjecture}\label{con:three-optimistic}
Let $G$ be a graph  on $2n$ vertices whose edge set is decomposed into perfect matchings $M_1, M_2$ and $M_3$ and let $a_1, a_2, a_3$ be non-negative integers such that $a_1+a_2+a_3=n-1$. If $G$ has a component that is not isomorphic to a $K_4$, then there exists a matching $M$ in $G$ such that $|M\cap M_i| = a_i$ for each $i\in\{1,2,3\}$.
\end{conjecture}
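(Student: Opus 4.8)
Since a matching of $G$ and its intersection numbers with $M_1,M_2,M_3$ are additive over the connected components, the first move is to pass to components: for each component $C$ on $2m_C$ vertices one asks which colour vectors $(b_1,b_2,b_3)$ are realised by a matching of $C$, and the achievable set of $G$ is the sum of these. A matching of size $n-1$ misses exactly two vertices, and since every component misses an even number of vertices, all but one component must be \emph{perfectly} matched, while a single distinguished component $C^*$ carries a \emph{near-perfect} matching of size $m_{C^*}-1$. Two facts will be used throughout: every component $C$ admits the three monochromatic perfect matchings $m_C e_1, m_C e_2, m_C e_3$ (the restrictions of $M_1,M_2,M_3$), and since a connected cubic graph other than $K_4$ has at least six vertices, the largest component of $G$ is automatically non-$K_4$ as soon as $G$ has any non-$K_4$ component. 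I would take $C^*$ to be this largest component.

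The heart of the matter is the connected case, i.e.\ the following flexibility lemma: \emph{every connected cubic properly $3$-edge-coloured graph $C$ on $2m$ vertices other than $K_4$ realises every colour vector $(b_1,b_2,b_3)$ with $b_1+b_2+b_3=m-1$}. By Theorem~\ref{thm:arbitrary} applied to $C$ alone, all vectors of total at most $m-2$ are already realisable, so the task is only to climb one level higher, to near-perfect matchings. I would prove this by fixing a near-perfect matching and rebalancing its colours along alternating cycles in the $2$-factors $M_i\cup M_j$ and along alternating paths joining its two exposed vertices: a cycle swap of length $2\ell$ changes the vector by $(\pm\ell,\mp\ell,0)$ and its analogues, while a path swap redistributes colours and relocates the two holes. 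The role of excluding $K_4$ is exactly that $K_4$ is the unique case in which all three $2$-factors are a single $4$-cycle, leaving no room to manoeuvre; for every other connected cubic graph one has enough cycle structure to reach an arbitrary prescribed near-perfect vector. Making this reachability statement precise and exhaustive is, I expect, the first main obstacle.

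Granting the lemma, I would assemble the global matching by induction on the number of components. Keep $C^*$ (the largest, hence non-$K_4$) in reserve as the near-perfect component and peel off the remaining components one at a time, assigning to each a perfect matching whose colour vector stays coordinatewise below the running target $a=(a_1,a_2,a_3)$. A component of mass at most $\max_i a_i$ can be given a monochromatic perfect matching in a colour with enough remaining budget, and the final residual is absorbed by $C^*$ through the lemma, since $C^*$ realises \emph{every} near-perfect vector of the correct total $m_{C^*}-1\ge 2$. Concretely, the other components must be packed into three bins of capacities $a_1,a_2,a_3$ (total $n-1$), with the slack $m_{C^*}-1$ reserved for $C^*$; because the three masses would otherwise sum to more than $n$, at most one component other than $C^*$ can have mass exceeding $n/3$, so only a single genuinely large obstacle can arise.

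That single large non-$K_4$ component $C_1$, whose mass exceeds every $a_j$, is the second and more serious obstacle: a monochromatic perfect matching of $C_1$ overflows every bin, so one needs a sufficiently \emph{balanced} perfect matching of $C_1$, with each colour class at most the corresponding $a_j$. Guaranteeing such a matching is precisely where the hypothesis ``not $K_4$'' must do real work, for $K_4$ is the one connected cubic graph whose only perfect matchings are monochromatic, and this monochromaticity is exactly what drives the extremal construction of Remark~2. I would attack it with the same $2$-factor rebalancing as in the engine lemma, now applied at the perfect-matching level, showing that a large bridgeless cubic graph possesses perfect matchings of every sufficiently balanced colour type, and dispatching the finitely many small or bridge-containing exceptional shapes directly. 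If both obstacles are resolved, the all-$K_4$ configurations are isolated as the only failures, in agreement with Remark~2 and confirming the conjecture.
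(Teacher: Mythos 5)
First, be aware that the statement you are proving is Conjecture~\ref{con:three-optimistic} of the paper: it is an open problem, the paper offers no proof of it, and so your proposal can only be judged on its own terms. On those terms it is not a proof but a reduction scheme, and both of the places you yourself flag as ``obstacles'' are genuine, unresolved gaps containing essentially all of the difficulty. The first obstacle, your ``flexibility lemma'', \emph{is} the connected case of the conjecture, and the alternating cycle/path swapping you sketch for it is the same mechanism as in the paper's proof of Theorem~\ref{thm:arbitrary}; but that proof works only because a matching with $a_1+a_2+a_3\le n-2$ leaves \emph{four} exposed vertices, which is what allows the unmatched vertex $z$ to be chosen distinct from the endpoints of $P_{23}(M,x)$ and from $M_3(x)$. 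At colour total $m-1$ only two vertices are exposed, that choice is impossible, and you give no substitute argument --- this is precisely why the statement is a conjecture rather than a corollary of Theorem~\ref{thm:arbitrary}.

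Second, your plan for the ``second obstacle'' rests on a claim that is false: you propose to show that a large bridgeless cubic graph possesses perfect matchings of every sufficiently balanced colour type, but Proposition~\ref{prop:construction} with $k=3$ produces, for every $n\ge 4$ and \emph{every} target $(a_1,a_2,a_3)$ with $a_1+a_2+a_3=n$ and no $a_i=n$ (in particular the perfectly balanced targets), a connected, bipartite, hence bridgeless, properly $3$-edge-coloured cubic graph on $2n$ vertices with no perfect matching of that type. What your assembly actually requires is the much weaker ``box'' statement (some perfect matching with $b_i\le a_i$ where the capacities have total at least $m+2$), and that is again open. Moreover, even the bookkeeping of the assembly step fails as stated: your claim that components of mass at most $\max_i a_i$ can be handled greedily by monochromatic matchings, so that ``only a single genuinely large obstacle can arise'', breaks for $(a_1,a_2,a_3)=(5,5,5)$ with four $K_{3,3}$ components and a non-$K_4$ component $C^*$ on $8$ vertices: each $K_{3,3}$ has mass $3\le 5$, but no bin of capacity $5$ holds two monochromatic $K_{3,3}$-matchings, so greedy placement strands the fourth $K_{3,3}$; one is forced to use its mixed $(1,1,1)$ perfect matching (its only non-monochromatic type). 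So the packing interacts with the colour structure of \emph{every} component, not just one large one. In short: passing to components and the observation that the near-perfect component may be taken to be the largest (hence non-$K_4$) one are fine, but everything beyond that point is either unproven or incorrect as stated.
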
 
A positive answer to this conjecture would in particular complete the resolution of the question of Arman et al. for three colours, as it implies that for a colour-multiplicity triple  $(a_1,a_2,a_3)$ with $a_1+a_2+a_3 = n-1$ a colourful matching is guaranteed to exist if and only if at least one of the $a_i$ is even. 
This would also imply that such a matching always exists if $n$ is odd.  

The construction in Proposition~\ref{prop:construction} is bipartite. We conjecture that the $n-2$ in Theorem~\ref{thm:arbitrary} can be replaced with $n-1$ if $G$ is assumed to be bipartite. (This is actually a special case of Conjecture~\ref{con:three-optimistic}.)
Even more generally, we suspect that for bipartite graphs the condition of Proposition~\ref{prop:construction} on the colour-multiplicities is best possible. More precisely, we conjecture that the following multiplicity version of the Ryser-Brualdi-Stein conjecture is true\footnote{Noga Alon independently also asked this as a question \cite{personalcommunication}.}.

\begin{conjecture}\label{con:optimistic}
Let $G$ be a complete bipartite graph on $2n$ vertices whose edge set is decomposed into perfect matchings $M_i$, $i=1,\ldots, n$. Let $a_i$, $i\in \{1,\ldots, n\}$ be a sequence of non-negative integers such that $\sum_i a_i=n-1$. Then, there exists a matching $M$ in $G$ such that $|M\cap M_i| = a_i$ for each $i\in\{1,\ldots, n\}$.
\end{conjecture}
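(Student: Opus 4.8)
The plan is to first recast the statement in the language of Latin squares: a proper edge-colouring of $K_{n,n}$ with $n$ colours is the same as an $n\times n$ Latin square $L$, where colour $i$ corresponds to the cells carrying symbol $i$, and a matching of $K_{n,n}$ corresponds to a partial permutation matrix (a set of cells, no two sharing a row or a column). Thus we are seeking a partial permutation matrix of size $n-1$ whose multiset of symbols is exactly $(a_1,\dots,a_n)$. Taking all $a_i\in\{0,1\}$, with $n-1$ of them equal to $1$, recovers precisely the Brualdi--Stein statement that every Latin square has a partial transversal of size $n-1$. Consequently Conjecture~\ref{con:optimistic} is a genuine strengthening of Ryser--Brualdi--Stein and cannot be established without, en route, reproving that conjecture; I therefore do not expect an elementary argument, and the realistic goal is to adapt the strongest available machinery for rainbow matchings.

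Before attacking the integral problem it is worth recording that its fractional relaxation is trivial, which pinpoints where the difficulty lies. Assigning to every colour-$i$ edge the weight $a_i/n$ produces a fractional matching: each colour contributes total weight $n\cdot(a_i/n)=a_i$, and each vertex, lying in exactly one edge of every colour, receives total weight $\sum_i a_i/n=(n-1)/n<1$. Hence the only obstruction is integrality, exactly as for Ryser--Brualdi--Stein, and the whole content of the conjecture is a rounding statement.

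My main line of attack would be the absorption method, as developed for rainbow matchings by Montgomery--Pokrovskiy--Sudakov, Keevash--Pokrovskiy--Sudakov--Yepremyan and others. The scheme has three stages: first, set aside a small random absorbing structure; second, using a semi-random (nibble) process guided by the fractional solution above, build a matching on all but a few vertices whose colour multiplicities agree with $(a_1,\dots,a_n)$ up to small errors; third, use the reserved structure to simultaneously cover the leftover vertices and correct the multiplicities exactly. The genuinely new ingredient, beyond ordinary rainbow absorption, is a gadget that adjusts colour multiplicities by $\pm1$ without changing the set of covered vertices: since for any two colours $i,j$ the union $M_i\cup M_j$ decomposes into even $i/j$-alternating cycles, alternating swaps furnish a supply of such local exchanges, and reserving a reservoir of them in advance should let us pass from approximately correct to exactly correct multiplicities. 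Working out this exchange gadget rigorously is the key technical step to be supplied.

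The hard part is precisely the step that is hard for Ryser--Brualdi--Stein: achieving the \emph{exact} bound $\sum a_i=n-1$ rather than $(1-o(1))n$. The programme above should, with enough care, deliver an approximate version in which $\sum a_i$ is allowed to be $(1-o(1))n$ while the prescribed multiplicities are met, but the last $o(n)$ vertices and the final few units of each multiplicity are where every known argument stalls. As an independent route I would also try to \emph{reduce} the conjecture to Brualdi--Stein as a black box, splitting colour $i$ into $a_i$ copies and re-properising the colouring; this appears delicate, since splitting a perfect matching into several colour classes changes the order of the square and destroys the Latin property, and even the implication ``Ryser--Brualdi--Stein $\Rightarrow$ Conjecture~\ref{con:optimistic}'' would already be a worthwhile result. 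Finally, one should not expect naive induction to help: deleting the two endpoints of a chosen edge leaves a frequency square rather than a Latin square, so the induction hypothesis no longer applies, which is the familiar reason this circle of problems resists recursion.
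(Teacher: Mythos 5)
The statement you are addressing is a \emph{conjecture}: the paper offers no proof of it, and explicitly notes that it implies the Ryser--Brualdi--Stein conjecture (take $a_i=1$ for $i\leq n-1$ and $a_n=0$), so any complete proof would settle a famous open problem. Your write-up is consistent with this reality — it is a research programme, not a proof, and you say so yourself. Your preliminary observations are correct: the Latin-square reformulation is the standard one, and the fractional relaxation (weight $a_i/n$ on every colour-$i$ edge) is valid and does isolate integrality as the only obstruction. But as a proof attempt there are two concrete gaps. First, the colour-multiplicity exchange gadget is never constructed. It is not enough to observe that $M_i\cup M_j$ decomposes into alternating cycles: swapping along such a cycle of length $2\ell$ trades $\ell$ edges of colour $i$ for $\ell$ edges of colour $j$, not one for one, unless you uncover some of its vertices — in which case you are no longer adjusting multiplicities ``without changing the set of covered vertices'' as your plan requires. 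Designing exchanges that shift a single unit of multiplicity while remaining compatible with an absorbing structure is precisely the technical heart of the matter, not a routine adaptation of existing rainbow-matching absorption.

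Second, and more fundamentally, you concede that the nibble-plus-absorption scheme would at best yield the approximate version with $\sum_i a_i\leq(1-o(1))n$, and that the passage to the exact bound $n-1$ is where every known approach stalls. That passage is the entire content of the conjecture (and of Ryser--Brualdi--Stein itself); a plan that defers exactly this step has not narrowed the problem. It is also worth calibrating ambitions against what the paper records as open: even the special case of Conjecture~\ref{con:optimistic} with only \emph{three} non-zero multiplicities — the bipartite strengthening of Theorem~\ref{thm:arbitrary} from $n-2$ to $n-1$ — is unresolved. A realistic attack should be tested on that case first, where the full absorption machinery is likely unnecessary and where the paper's own alternating-path techniques (as in the proof of Theorem~\ref{thm:arbitrary}) are the natural starting point. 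The one piece of genuine evidence the paper cites for the conjecture, the Hall and Salzborn--Szekeres results ruling out counterexamples from abelian group addition tables, points to an algebraic rather than probabilistic route, and your proposal does not engage with it.
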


Note that by K\"onig's Theorem any collection  of $k$ pairwise disjoint perfect matchings of $K_{n,n}$ can be extended to a collection of $n$ pairwise disjoint perfect matchings.
Therefore, if $G$ is bipartite the question of Arman et al for the colour multiplicity-tuple $(a_1,\ldots,a_k)$ is equivalent to the same question for the $n$-tuple $(a_1,\ldots,a_k,0,\ldots,0)$.
Conjecture \ref{con:optimistic} is easy to show when there are at most two non-zero colour-multiplicities.
The case of three non-zero colour-multiplicities, that is the strengthening of Theorem~\ref{thm:arbitrary} for bipartite graphs, is already open.
As in Theorem~\ref{thm:arbitrary}, Conjecture~\ref{con:optimistic} could also be true for multigraphs, but for simplicity we restrict ourselves to  simple graphs. 

Conjecture~\ref{con:optimistic} is quite optimistic, as it implies the Ryser-Brualdi-Stein conjecture (see \cite{keevash2022new} and the citations therein) by setting $a_i=1$ for all $i\in \{1,\ldots, n-1\}$ and $a_n=0$.
In fact, Conjecture~\ref{con:optimistic} is also related to the stronger Aharoni-Berger conjecture (see \cite{pokrovskiy2018approximate}).
Several other related generalisations of the Ryser-Brualdi-Stein conjecture have been previously proposed.
See for example Conjecture 1.9 in \cite{aharoni2017fair}, see also \cite{black2020fair}. 

{\bf Remark 3.} An old result of Hall \cite{hall} which was independently discovered by Salzborn and Szekeres \cite{salzborn} (see also \cite{ullman2019differences} for a modern exposition) shows that there can be no counterexample to Conjecture~\ref{con:optimistic} coming from addition tables of abelian groups (as in the proof of Proposition~\ref{prop:construction}).
It seems to be a problem of independent interest to generalise such results to non-abelian groups, which would give further evidence for Conjecture~\ref{con:optimistic}.

\section{Proofs}\label{sec:proof}
\begin{proof}[Proof of Proposition~\ref{prop:construction}.]
First we show that if $k<n$ or $k=n$ is even then there exist pairwise distinct $x_1, x_2, \ldots$ or $x_k \in \mathbb{Z}_n$ such that $a_1x_1+\cdots +a_kx_k\not\equiv 0 \pmod n$.  If $\sum_{i=1}^{k} i a_{\pi(i)} \not\equiv 0 \pmod n$ for some $\pi\in S_k$, then the choice $x_{\pi(i)}=i$ 
for every $i\in [k]$ works. This is certainly the case unless $a_1=\cdots = a_k=n/k$. In that case, if $n=k$ is even, then $\sum_{i=1}^n i \cdot 1\equiv n/2 \not\equiv 0 \pmod n$. If $n>k$ then, since none of the colour-multiplicities is $n$, we can assume without loss of generality that $a_k\not\equiv 0 \pmod n$. Then the choice $x_k = k+1$ and $x_i=i$ for every $i<k$ works, as then $\sum_{i=1}^k x_ia_i \equiv 0 + a_k \not\equiv 0 \pmod n$. Here note that since $k$ divides $n$ and $k<n$ we have $k\leq n/2$, so $k+ 1 <n$.

Let $G$ be a bipartite graph between two copies of the cyclic group $\mathbb{Z}_{n}$ consisting of the edges whose endpoints sum to $x_1, x_2, \ldots,$ or $x_k$. The edges whose endpoints sum to $x_i$ form a perfect matching $M_i$, and these matchings are pairwise disjoint. 
Suppose there exists a perfect matching $M$ of $G$ with $|M\cap M_i|=a_i$ for each $i\in [k]$. Summing up the endpoints of $M$ in two different ways, we obtain
$$a_1\cdot x_1+a_2\cdot x_2+\cdots + a_k\cdot x_k=\sum _{i\in \mathbb{Z}_{n}} i+ \sum_{i\in \mathbb{Z}_{n}}i.$$
\par Observe that the right hand side of the above equality is $0$ (for example, by pairing up inverses), which contradicts the choice of $x_1, x_2, \cdots , x_k$.
\end{proof}

\begin{proof}[Proof of Theorem~\ref{thm:arbitrary}.]
We say that a matching $M\subset E(G)$ is \emph{distributed as} $(a_1,a_2,a_3)$ if it satisfies $|M\cap M_1| = a_1$,  $|M\cap M_2| = a_2$, and $|M\cap M_3| = a_3$. It suffices to prove the claim for triples $(a_1,a_2,a_3)$ with $a_1 = \max\{ a_1,a_2,a_3 \}$ as the roles of the matchings are interchangeable.
We will show that given an $M$ that is distributed as $(a_1,a_2,a_3)$ with $a_1+a_2+a_3 = n-2$ we can find a matching $M'$ that is distributed as $(a_1-1,a_2+1,a_3)$.
This also implies the existence of matching distributed as $(a_1-1,a_2,a_3+1)$.
Starting from $M_1$ minus two arbitrary edges we can then find a matching distributed as $(a_1,a_2,a_3)$ for any such triple satisfying $a_1+a_2+a_3=n-2$.
\par For any matching $M \subset E(G)$ of size $n-2$ and any vertex $x$ that is unmatched by $M$, let $P_{23}(M,x)$ be the maximum $(M_2\setminus M)$-$(M_3\cap M)$-alternating path starting at $x$, and let $\ell_{23}(M,x)$ be its length. Let
    \[
    \ell_{23}(M) := \min_{x \text{ unmatched by } M} \ell_{23}(M,x).
    \]
For a matching $M$ of $G$ and $v\in V(G)$ denote by $M(v)$ the vertex $u$ that is matched by $M$ to $v$ i.e. $M(v)=u$ if and only if $\{v,u\}\in M$. 
Choose $M$ such that $\ell_{23}(M)$ is minimised over all matchings that are distributed as $(a_1,a_2,a_3)$.
Pick an unmatched vertex $x$ with $\ell_{23}(M,x) = \ell_{23}(M)$ and an unmatched vertex $z$ that is distinct from the endpoints of $P_{23}(M,x)$ and from $M_3(x)$.
We can choose such vertices because there are four unmatched vertices in total.
If $M_2(x)$ is incident to an edge of $M\cap M_1$ or unmatched we are done since in the former case the matching
    \[
    M \setminus \{ M_2(x)M_1(M_2(x)) \} \cup \{ xM_2(x) \}
    \] 
is distributed as $(a_1-1,a_2+1,a_3)$ while in the latter we can pick
    \[
    M \setminus \{ e \} \cup \{ xM_2(x) \}
    \] 
for any $e\in M\cap M_1$.
Hence we assume that $M_2(x)$ is incident to an edge of $M\cap M_3$.
Now $M_3(z)$ cannot be incident to an edge of $M\cap M_2$ because
    \[
    M' := M \setminus \{ M_2(x)M_3(M_2(x)), M_3(z)M_2(M_3(z)) \} \cup \{ xM_2(x), zM_3(z) \}
    \]
would be a matching that is distributed as $(a_1,a_2,a_3)$ and in which $P_{23}(M', M_3(M_2(x)))$ would be a path of length $\ell_{23}(M,x) -2$, which contradicts our choice of $M$.
Here it was important that $z$ is different from the endpoints of $P_{23}(M,x)$ so $P_{23}(M', M_3(M_2(x)))$ is a subpath of $P_{23}(M,x)$ not containing $x$ and therefore $P_{23}(M', M_3(M_2(x)))$ has smaller length than $P_{23}(M,x)$.
Therefore $M_3(z)$ is unmatched or incident to an edge of $M\cap M_1$.
If $M_3(z)$ is incident to $M\cap M_1$ then
    \[
    M'':= M \setminus \{ M_2(x)M_3(M_2(x)) , M_3(z)M_1(M_3(z))\} \cup \{ xM_2(x), zM_3(z) \}
    \]
is the desired matching.
Should $M_3(z)$ be unmatched then for any $e\in M \cap M_1$,
    \[
    M''':=	M \setminus \{ M_2(x)M_3(M_2(x)) , e\} \cup \{ xM_2(x), zM_3(z) \}
    \]
is distributed as $(a_1-1,a_2+1,a_3)$. Here we used that $M_3(x)\neq z$, or equivalently that $M_3(z)\neq x$. So under the previous that assumption $M_2(x)$ is incident to an edge in $M\cap M_3$, we have that the edges $xM_2(x), zM_3(z)$ are disjoint. Hence $M''$ and $M'''$ are indeed matchings of $G$.
\end{proof}

\bibliographystyle{plain}
\bibliography{bib}

\end{document}